\begin{document}
\baselineskip = 16pt

\newcommand \ZZ {{\mathbb Z}}
\newcommand \NN {{\mathbb N}}
\newcommand \RR {{\mathbb R}}
\newcommand \PR {{\mathbb P}}
\newcommand \AF {{\mathbb A}}
\newcommand \GG {{\mathbb G}}
\newcommand \QQ {{\mathbb Q}}
\newcommand \CC {{\mathbb C}}
\newcommand \bcA {{\mathscr A}}
\newcommand \bcC {{\mathscr C}}
\newcommand \bcD {{\mathscr D}}
\newcommand \bcF {{\mathscr F}}
\newcommand \bcG {{\mathscr G}}
\newcommand \bcH {{\mathscr H}}
\newcommand \bcM {{\mathscr M}}
\newcommand \bcJ {{\mathscr J}}
\newcommand \bcK {{\mathscr K}}
\newcommand \bcL {{\mathscr L}}
\newcommand \bcO {{\mathscr O}}
\newcommand \bcP {{\mathscr P}}
\newcommand \bcQ {{\mathscr Q}}
\newcommand \bcR {{\mathscr R}}
\newcommand \bcS {{\mathscr S}}
\newcommand \bcV {{\mathscr V}}
\newcommand \bcW {{\mathscr W}}
\newcommand \bcX {{\mathscr X}}
\newcommand \bcY {{\mathscr Y}}
\newcommand \bcZ {{\mathscr Z}}
\newcommand \goa {{\mathfrak a}}
\newcommand \gob {{\mathfrak b}}
\newcommand \goc {{\mathfrak c}}
\newcommand \gom {{\mathfrak m}}
\newcommand \gon {{\mathfrak n}}
\newcommand \gop {{\mathfrak p}}
\newcommand \goq {{\mathfrak q}}
\newcommand \goQ {{\mathfrak Q}}
\newcommand \goP {{\mathfrak P}}
\newcommand \goM {{\mathfrak M}}
\newcommand \goN {{\mathfrak N}}
\newcommand \uno {{\mathbbm 1}}
\newcommand \Le {{\mathbbm L}}
\newcommand \Spec {{\rm {Spec}}}
\newcommand \Gr {{\rm {Gr}}}
\newcommand \Pic {{\rm {Pic}}}
\newcommand \Jac {{{J}}}
\newcommand \Alb {{\rm {Alb}}}
\newcommand \Corr {{Corr}}
\newcommand \Chow {{\mathscr C}}
\newcommand \Sym {{\rm {Sym}}}
\newcommand \Prym {{\rm {Prym}}}
\newcommand \cha {{\rm {char}}}
\newcommand \eff {{\rm {eff}}}
\newcommand \tr {{\rm {tr}}}
\newcommand \Tr {{\rm {Tr}}}
\newcommand \pr {{\rm {pr}}}
\newcommand \ev {{\it {ev}}}
\newcommand \cl {{\rm {cl}}}
\newcommand \interior {{\rm {Int}}}
\newcommand \sep {{\rm {sep}}}
\newcommand \td {{\rm {tdeg}}}
\newcommand \alg {{\rm {alg}}}
\newcommand \im {{\rm im}}
\newcommand \gr {{\rm {gr}}}
\newcommand \op {{\rm op}}
\newcommand \Hom {{\rm Hom}}
\newcommand \Hilb {{\rm Hilb}}
\newcommand \Sch {{\mathscr S\! }{\it ch}}
\newcommand \cHilb {{\mathscr H\! }{\it ilb}}
\newcommand \cHom {{\mathscr H\! }{\it om}}
\newcommand \colim {{{\rm colim}\, }} 
\newcommand \End {{\rm {End}}}
\newcommand \coker {{\rm {coker}}}
\newcommand \id {{\rm {id}}}
\newcommand \van {{\rm {van}}}
\newcommand \spc {{\rm {sp}}}
\newcommand \Ob {{\rm Ob}}
\newcommand \Aut {{\rm Aut}}
\newcommand \cor {{\rm {cor}}}
\newcommand \Cor {{\it {Corr}}}
\newcommand \res {{\rm {res}}}
\newcommand \red {{\rm{red}}}
\newcommand \Gal {{\rm {Gal}}}
\newcommand \PGL {{\rm {PGL}}}
\newcommand \Bl {{\rm {Bl}}}
\newcommand \Sing {{\rm {Sing}}}
\newcommand \spn {{\rm {span}}}
\newcommand \Nm {{\rm {Nm}}}
\newcommand \inv {{\rm {inv}}}
\newcommand \codim {{\rm {codim}}}
\newcommand \Div{{\rm{Div}}}
\newcommand \sg {{\Sigma }}
\newcommand \DM {{\sf DM}}
\newcommand \Gm {{{\mathbb G}_{\rm m}}}
\newcommand \tame {\rm {tame }}
\newcommand \znak {{\natural }}
\newcommand \lra {\longrightarrow}
\newcommand \hra {\hookrightarrow}
\newcommand \rra {\rightrightarrows}
\newcommand \ord {{\rm {ord}}}
\newcommand \Rat {{\mathscr Rat}}
\newcommand \rd {{\rm {red}}}
\newcommand \bSpec {{\bf {Spec}}}
\newcommand \Proj {{\rm {Proj}}}
\newcommand \pdiv {{\rm {div}}}
\newcommand \Supp {{\rm {Supp}}}
\newcommand \CH {{\it {CH}}}
\newcommand \NS {{\it {NS}}}
\newcommand \wt {\widetilde }
\newcommand \ac {\acute }
\newcommand \ch {\check }
\newcommand \ol {\overline }
\newcommand \Th {\Theta}
\newcommand \cAb {{\mathscr A\! }{\it b}}

\newenvironment{pf}{\par\noindent{\em Proof}.}{\hfill\framebox(6,6)
\par\medskip}

\newtheorem{theorem}[subsection]{Theorem}
\newtheorem{conjecture}[subsection]{Conjecture}
\newtheorem{proposition}[subsection]{Proposition}
\newtheorem{lemma}[subsection]{Lemma}
\newtheorem{remark}[subsection]{Remark}
\newtheorem{remarks}[subsection]{Remarks}
\newtheorem{definition}[subsection]{Definition}
\newtheorem{corollary}[subsection]{Corollary}
\newtheorem{example}[subsection]{Example}
\newtheorem{examples}[subsection]{examples}
\title{Algebraic cycles on hyperplane sections of  hypersurfaces in $\PR^n$ for $n=5,6$}
\author{Kalyan Banerjee}
\begin{abstract}
Let $X$ be a cubic hypersurface in $\PR^6$ or a hypersurface of degree greater than equal to $7$ in $\PR^5$. In this note we try to understand, for a very general hyperplane section of $X$, the non-injectivity locus of the corresponding push-forward homomorphism at the level of Chow group of certain dimension.
\end{abstract}
\maketitle

\section{Introduction}
The question of injectivity at the level of algebraically trivial one cycles on a hypersurface in $\PR^n$ is an interesting one. The conjecture due to Nori and Paranjape says that for a hypersurface of degree greater  than $2n-3$ in $\PR^n$, the group of algebraically trivial  cycles (of certain codimension) modulo rational equivalence is isomorphic to $\ZZ$, see, \cite{Nori},\cite{Paranjape} for the precise formulation of the conjecture. Inspired by these conjectures, the authors in \cite{BIL} has formulated the following question:

Suppose $X$ is a smooth projective variety and $D$ is a smooth ample divisor on $X$. Suppose that $D$ is of high degree inside $X$, then the map $\CH_i(D)\to \CH_i(X)$ is injective, for a certain range of $i$. For more precise formulation please see \cite{BIL}.

Inspired by the above conjectures this manuscript is an attempt to investigate the nature of the kernel of the map $\CH_2(X_t)\to \CH_2(X)$, where $X$ is smooth projective hypersurface of degree $3$  in $\PR^6$  and $X_t$ is a smooth hyperplane section. Also we investigate the same question for one cycle on a smooth hyperplane section of a hypersurface of degree greater or equal than $7$ in $\PR^5$. The main aim is to understand the non-injectivity locus of the push-forward homomorphism at the level of cycles. The main technique involves monodromy argument on the cohomology of hyperplane sections. The main result is as follows:

\smallskip

\textit{Let $X$ be a smooth hypersurface of degree greater or equal than $7$ in $\PR^5$. Then for a very general hyperplane section $X_t$ of $X$, the kernel of the push-forward map $A_1(X_t)\to A_1(X)$ is parametrised by
$$\{Z\subset X_t|Z\not\in \bcK_t, t\in U\}\subset \bcC_{1,d,U}\;,$$
here $\bcK$ a generic multisection of $\bcC_{1,d,U}$, which is the relative Chow scheme parametrizing one cycles on smooth hyperplane sections of $X$.}

\smallskip

{\small \textbf{Acknowledgements:} The author is indebted to Jaya Iyer for precise formulation of the main question concerned in the paper. The author is thankful to N.Fakhruddin and V.Srinivas for helpful discussion regarding the theme of the manuscript. The author is academically indebted to Claire Voisin whose ideas inspire this work.}

\smallskip

Thorough-out this text we work over the field of complex numbers.

\section{Push-forward homomorphism for a very general hyperplane section of a cubic hypersurface in $\PR^6$}

Let $X$ be a hypersurface of degree $3$ in $\PR^6$. Let $t$ in ${\PR^6}^*$ be such that the corresponding hyperplane section $H_t\cap X$ is smooth. Call it $X_t$. Then we consider the closed embedding $j_t:X_t\to X$. Consider the push-forward homomorphism $j_{t*}$ from $\CH_2(X_t)$ to $\CH_2(X)$. We claim that this homomorphism is injective for a very general $t$. First we notice that $X_t$ is Fano, hence rationally connected. Hence the algebraic equivalence coincides with the homological equivalence on $X_t$ and the group of dimension $2$ algebraically trivial cycles modulo rational equivalence is isomorphic to the corresponding intermediate Jacobian. Also the intermediate jacobian $IJ_2(X_t)$ is isomorphic to $IJ_2(X)$ by the Lefschetz hyperplane theorem. Therefore it reduces to prove that the Neron-Severi group $\NS_2(X_t)$ injects into $\NS_2(X)$ for a very general $t$.

Let $U$ in ${\PR^6}^*$ parametrises the smooth hyperplane sections of $X$.

First we introduce the following notations. Let $\bcH_i$ denote the Hilbert scheme parametrising degree $i$ dimension $2$ subschemes on $X$. Let $\bcH_{i,t}$ denote the closed subscheme of $\bcH_i$, which parametrises the two dimensional Zariski closed subschemes of $X$ incident on $X_t$. More generally
$$\bcH_{i,U}:=\{(t,S)|S\subset X_t, t\in U\}$$
Also we consider the relative Chow scheme
$$\bcC_{i,U}:=\{(t,Z)|\Supp(Z)\subset X_t, t\in U\}$$

Now we prove the following theorem following the ideas of C.Voisin, \cite{Vo}[chapter 3, last section]:

\begin{theorem}
Let $X$ be a hypersurface in $\PR^6$, such that the smooth hyperplane sections $X_t$, satisfies the following:

i) $H^{3,1}(X_t)\cap \ker(H^4(X_t,\CC)\to H^6(X,\CC))$ is non-zero.

ii) The Hodge conjecture is true on $(2,2)$ classes in $H^4(X_t,\CC)$.

Then for a very general $t$ in $U$, the non-injectivty locus of the map from $\NS_2(X_t)$ to $\NS_2(X)$ is parametrised by
$$\{z\subset X_t|z\not\in K_t, t\in U\}\subset \bcC_{2,d,U}$$
where $K$ is a generic multisection of $\bcC_{2,d,U}$.
\end{theorem}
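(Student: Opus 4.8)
The plan is to follow Voisin's strategy from \cite{Vo}[chapter 3] for the Noether–Lefschetz–type locus, adapted to the push-forward $\NS_2(X_t)\to \NS_2(X)$. First I would set up the variation of Hodge structure on $R^4\pi_*\CC$ for the family $\pi:\bcX_U\to U$ of smooth hyperplane sections, together with the sub-variation given by the \emph{vanishing} (primitive) part $H^4_{\van}(X_t)=\ker(H^4(X_t,\CC)\to H^6(X,\CC))$. The non-injectivity of $\NS_2(X_t)\to \NS_2(X)$ is controlled by the Hodge classes in this vanishing part: a class in $\ker(\NS_2(X_t)\to\NS_2(X))$ is (modulo the fixed part coming from $X$) a Hodge class lying in $H^4_{\van}(X_t)$, and by hypothesis (ii) such a class is algebraic, hence represented by some $2$-cycle $z\subset X_t$, which is exactly an element of $\bcC_{2,d,U}$ for an appropriate $d$. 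So the content is to show that the locus in $U$ where such vanishing Hodge classes exist is a countable union of proper closed subvarieties, and to package the corresponding cycles as the complement of a generic multisection $K$ of the relative Chow scheme.

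The key steps, in order, would be: (1) Using hypothesis (i), the sub-Hodge structure $H^4_{\van}(X_t)$ has nonzero $H^{3,1}$, so it is not purely of type $(2,2)$; therefore by the theorem of the fixed part / Deligne's global invariant cycle theorem, the space of \emph{global} flat sections that are everywhere of type $(2,2)$ inside $H^4_{\van}$ is a proper sub-local-system — in fact, one should invoke the monodromy: the monodromy representation on $H^4_{\van}(X_t)$ acts irreducibly (this is the Lefschetz-type statement for the monodromy of Lefschetz pencils / universal hyperplane sections, via Picard–Lefschetz applied to the dual variety), so any monodromy-invariant subspace is $0$ or everything; since $H^4_{\van}$ is not of pure type $(2,2)$, no nonzero Hodge class is monodromy-invariant. (2) Consequently, for very general $t$, $H^{2,2}(X_t)\cap H^4_{\van}(X_t)=0$, i.e. $\NS_2(X_t)\to\NS_2(X)$ is injective; and the locus where it fails is the Noether–Lefschetz locus $\mathrm{NL}\subset U$, a countable union of proper closed algebraic subsets $U_\alpha$, each of which supports a Hodge class that is algebraic by (ii). (3) Over each $U_\alpha$ (after shrinking/normalizing) the algebraic cycle realizing the Hodge class spreads out to a relative cycle, giving a multisection of $\bcC_{2,d,U}\to U$ over $U_\alpha$; taking $K$ to be the union of these over a chosen generic/maximal stratum, the cycles $z\subset X_t$ producing non-injectivity are precisely those $z\in\bcC_{2,d,U}$ whose fibre point does not lie on $K_t$ — being "not on $K$" records that $z$ is genuinely new on $X_t$ rather than restricted from the ambient fixed part. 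One then checks that this combinatorial bookkeeping matches the asserted description $\{z\subset X_t\mid z\notin K_t,\ t\in U\}\subset\bcC_{2,d,U}$.

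The main obstacle I expect is step (1): proving that the monodromy acts irreducibly (or at least with no invariant Hodge-theoretic subspace) on $H^4_{\van}(X_t)$ for hyperplane sections of a \emph{fixed} hypersurface $X\subset\PR^6$ — this is not the classical Lefschetz situation of hyperplane sections of $\PR^n$, so one must either quote the appropriate generalization (monodromy of the universal family of hyperplane sections of a smooth projective variety acts irreducibly on the vanishing cohomology, cf. the Picard–Lefschetz formula together with the fact that the vanishing cycles are all conjugate and span) or reduce to it via a Lefschetz pencil on $X$. A secondary delicate point is the precise scheme-theoretic construction of the multisection $K$ and making the bound on the degree $d$ of the cycles uniform — one needs the Hodge classes on the various Noether–Lefschetz components to be represented by cycles of bounded degree, which follows from the fact that $\bcC_{2,d,U}$ for fixed $d$ is of finite type and the components $U_\alpha$ that actually carry algebraic classes are dominated by countably many such $\bcC_{2,d,U}$, so after fixing $d$ large enough the generic component is captured. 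The rest — the Lefschetz hyperplane isomorphism $IJ_2(X_t)\cong IJ_2(X)$, rational connectedness of the Fano $X_t$, and the reduction to Néron–Severi — is already in place from the discussion preceding the theorem.
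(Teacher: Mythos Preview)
Your core argument is correct and shares the paper's key ingredient---irreducibility of the monodromy representation on $H^4_{\van}(X_t,\QQ)$ together with hypothesis (i) to exclude the case where the vanishing cohomology is spanned by algebraic classes---but the route is genuinely different. The paper works from the Chow-scheme side: it fixes a generic hyperplane multisection $K\subset\bcC_{2,d,U}$, uses Ehresmann to make $K\to V$ a fibration, and forms the sub-local-system $F\subset R^4\phi_*\QQ|_V$ generated by the cycle classes of the finitely many points in each fibre $K_t$; the intersection $F'=F\cap R^4\phi_*\QQ_{\van}$ is then forced to vanish by irreducibility plus (i), so cycles parametrised by $K_t$ have zero vanishing class and inject. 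You instead run the Noether--Lefschetz argument directly on Hodge loci: monodromy irreducibility plus (i) implies no Hodge-locus component can fill all of $U$, whence for very general $t$ there is no nonzero rational $(2,2)$ class in $H^4_{\van}(X_t)$ at all, and the push-forward is \emph{injective}. That is actually stronger than the theorem's stated conclusion and renders your step (3) unnecessary; in fact step (3) misreads $K$, which in the paper is a \emph{generic} hyperplane multisection of the entire relative Chow scheme (over the locus where $\bcC_{2,d,U}\to U$ is dominant), not something assembled from cycles living over Noether--Lefschetz components. Your concern about uniform degree bounds likewise does not arise in the paper's argument, which treats each $d$ separately and removes from $U$ the countably many images of the non-dominant $\bcC_{2,d,U}$'s. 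In short: your approach is more direct and yields the sharper injectivity statement, while the paper's approach makes explicit the Chow-scheme bookkeeping that the phrasing of the theorem refers to.
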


\begin{proof}

We now prove that the projection map from $\bcH_{i,U}$ to $U$ is dominant if the pairs $(t,S)$ in $\bcH_{i,U}$ satisfies the condition:

if the class of $S$ in $\NS_2(X)$ is zero then $S$ is actually zero in $\NS_2(X_t)$. That is if $S$ is homologically trivial on $X$, then it is actually homologically trivial on $X_t$.

So assume that $\bcH_{i,U}\to U$ is dominant. Consider an embedding of $\bcH_{i,U}$ into some projective space. For a general multi-hyperplane-section $K$ of $\bcH_{i,U}$, the map from $K\to U$ is generically finite. Then we throw out the Zariski closed subset of $U$, on which the map $K\to U$ is not smooth. Then we have a smooth, proper map $K\to V$. Hence by Ehressmann's theorem the map $K\to V$ is a fibration of smooth manifolds. Consider $t$ in $V$, let $S_{1,t},\cdots,S_{n,t}$ be the fiber over $t$. So $S_{i,t}$ is incident on $X_t$. Consider the cycle classes of $S_{i,t}$, call them $\alpha_{i,t}$. Consider the local system generated by $\alpha_{i,t}$. This forms a local system because $K\to V$ is fibration. This local system is a sub-local system of $R^4\phi_*\QQ|_V$, where $\phi$ is the map from $\bcX_V=\{(x,t)|x\in X_t, t\in V\}$ to $V$. Call this local system as $F$. Consider the morphism of sheaves
$$F\to R^4\phi_*\QQ|_V\to R^6\phi_*\QQ|_V\;.$$
Consider the kernel of this morphism, call it $F'$. Then $F'$ is a local subsystem in $(R^4\phi_*\QQ_{van})|_V$ which is by definition the local system associated to
$$\ker(H^4(X_t,\QQ)\to H^6(X,\QQ))$$
for $t$ in $V$. Now $V$ is Zariski open in $U$, hence $U\setminus V$ is of real codimension greater or equal than $2$. Therefore $\pi_1(V,t)$ surjects onto $\pi_1(U,t)$ for any $t$ in $V$. The representation of $\pi_1(U,t)$ is irreducible on
$$ker(H^4(X_t,\QQ)\to H^6(X,\QQ))\;.$$
Therefore the corresponding local system $(R^4\phi_*\QQ_{van})|_V$ is indecomposable. $F'$ is a sub-local system of $(R^4\phi_*\QQ_{van})|_V$, therefore $F'=0$ or $F'=(R^4\phi_*\QQ_{van})|_V$.
In the later case since the Hodge conjecture is true for $(2,2)$ classes, we have that all the classes in
$$ker(H^4(X_t,\QQ)\to H^6(X,\QQ))\;,$$
are Hodge classes, hence
$$H^{3,1}(X_t)\cap ker(H^4(X_t,\QQ)\to H^6(X,\QQ))=0 $$
contradicting the assumption of the theorem. Therefore $F'=0$.

Suppose that  the relative Chow scheme $\bcC_{2,d, U}$ consisting of pairs $(t,Z)$ such that support of $Z$ is contained in $X_t$, maps dominantly onto $U$. Here $Z$ is a degree $d$, dimension $2$ cycle on $X$. Then the argument as above produces a local system $F'$ associated to the projection $\bcC_{2,d,U}\to U$. This local system, by monodromy is either all of $R^4\phi_*\QQ_{van}|_V$ or it is zero. But by the assumption of the theorem, the first possibility do not occur. Hence $F'=0$.   Therefore the cycles in $\NS_2(X_t)$, parametrised by a hyperplane multisection $K$ of $\bcC_{2,d,U}$ which are homologically trivial on $\NS_2(X)$ are zero, if the map from $\bcC_{2,d,U}$ to $U$ is dominant. But it could happen that a cycle $z$ is supported on $X_t$, it is homologically trivial on $X$, but need not be in the fiber $K_t$. Therefore the noninjectivity locus of the map $\NS_2(X_t)\to \NS_2(X)$ is supported on the set 
$$\{z\subset X_t|z\not\in K_t, t\in U\}\subset \bcC_{2,d,U}$$
such that the map $\bcC_{2,d,U}\to U$ is dominant.  Consider all $d$ such that $\bcC_{2,d,U}\to U$ is not dominant. Let $Z_d$ be the image of $\bcC_{2,d,U}$ under the above map such that the image is a proper algebraic subset of $U$. Then for $t\in U\setminus \cup_d Z_d$, the cycles in $\NS_2(X_t)$ are parametrized by $\bcC_{2,d,U}$ such that the map from $\bcC_{2,d,U}$ to $U$ is dominant. Therefore by the previous monodromy argument the non-injectivity locus is given by 
$$\{z\subset X_t|z\not\in K_t, t\in U\}\subset \bcC_{2,d,U}$$
where $K$ is a generic hyperplane section of $\bcC_{2,d,U}$.
\end{proof}

\begin{corollary}
\label{cor1}
Let $X$ be a cubic fivefold in $\PR^6$. Then for a very general hyperplane section $X_t$ of $X$, the push-forward at the level of $\CH_2$ has the kernel parametrised by
$$\{z\subset X_t|z\not\in K_t, t\in U\}\subset \bcC_{2,d,U}$$
for $K$ a generic multisection of $\bcC_{2,d,U}$.
\end{corollary}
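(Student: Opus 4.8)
The plan is to deduce Corollary \ref{cor1} from the Theorem by verifying that a cubic fivefold $X\subset\PR^6$ satisfies hypotheses (i) and (ii) for its very general hyperplane section $X_t$, which is a smooth cubic fourfold. First I would recall the Hodge-theoretic structure of a smooth cubic fourfold: its primitive cohomology in degree $4$ carries a Hodge structure with $h^{3,1}=1$, $h^{2,2}_{\mathrm{prim}}=20$, $h^{1,3}=1$, so $H^{3,1}(X_t)$ is one-dimensional. To check (i), I need to know that the map $H^4(X_t,\CC)\to H^6(X,\CC)$ kills this $(3,1)$-line; since $X$ is a cubic fivefold its cohomology is very small (by Lefschetz and the Hodge numbers of cubic fivefolds, $H^6(X,\CC)$ is one-dimensional, spanned by an algebraic class, so the Gysin pushforward of the primitive part of $H^4(X_t)$ vanishes, while the non-primitive part maps to the algebraic class), and in particular $\ker(H^4(X_t,\CC)\to H^6(X,\CC))$ contains all of primitive cohomology, hence contains $H^{3,1}(X_t)$, which is nonzero. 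That gives (i).

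Next I would address (ii): the Hodge conjecture for $(2,2)$-classes on a smooth cubic fourfold. This is a known theorem — for a very general cubic fourfold the space of Hodge classes in $H^4$ is just the two-dimensional lattice spanned by powers of the hyperplane class (so trivially algebraic), and more relevantly the integral Hodge conjecture for cubic fourfolds holds in degree $4$ by work of Voisin (``Some aspects of the Hodge conjecture'' and the paper on the integral Hodge conjecture and Chow groups), using the geometry of the Fano variety of lines and the fact that $CH_1$ of a cubic fourfold is generated by lines. I would cite this and note that since we only need the hypothesis for a very general $t$, the weaker statement (Hodge conjecture for very general cubic fourfolds, where it is essentially immediate from the monodromy/Noether–Lefschetz argument) already suffices. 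Once (i) and (ii) are in place, the Theorem applies verbatim with $n=6$, $\deg X=3$, and identifies the non-injectivity locus of $\NS_2(X_t)\to\NS_2(X)$ with $\{z\subset X_t\mid z\notin K_t,\ t\in U\}\subset\bcC_{2,d,U}$ for $K$ a generic multisection.

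Finally I would translate this back to the statement about $\CH_2$. As already observed in the text preceding the Theorem, $X_t$ is Fano (a cubic fourfold), hence rationally connected, so algebraic and homological equivalence agree on $2$-cycles and $A_2(X_t)$ — the algebraically trivial part — is the intermediate Jacobian $IJ_2(X_t)$, which by the Lefschetz hyperplane theorem maps isomorphically to $IJ_2(X)$; thus the kernel of $\CH_2(X_t)\to\CH_2(X)$ is governed entirely by the kernel of $\NS_2(X_t)\to\NS_2(X)$. Combining this reduction with the conclusion of the Theorem yields the stated parametrization of the kernel of the $\CH_2$-pushforward.

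The main obstacle I anticipate is not the monodromy input (that is handed to us by the Theorem) but the careful justification of hypothesis (ii): one must be precise about whether the full integral/rational Hodge conjecture for $(2,2)$-classes on cubic fourfolds is being invoked, or only its (much easier) validity for a very general member of the family, and make sure the ``very general $t$'' in the Theorem's conclusion is compatible with the ``very general cubic fourfold'' locus where Hodge-theoretic genericity holds. A secondary point requiring care is confirming the precise shape of $H^\bullet(X,\CC)$ for a cubic fivefold so that $\ker(H^4(X_t,\CC)\to H^6(X,\CC))$ really does contain $H^{3,1}(X_t)$; this is elementary but should be spelled out.
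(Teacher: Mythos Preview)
Your proposal is correct and follows essentially the same route as the paper: verify hypotheses (i) and (ii) of the preceding Theorem for a cubic fourfold $X_t$, apply the Theorem, and combine with the already-established reduction from $\CH_2$ to $\NS_2$. The only minor difference is in the justification of (ii): the paper argues simply that $X_t$ is Fano, hence rationally connected, hence satisfies the Hodge conjecture for $(2,2)$-classes, whereas you invoke the specific result for cubic fourfolds (Voisin's integral Hodge conjecture) and the Noether--Lefschetz picture for the very general member; both are valid, and your version is in fact more carefully sourced.
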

\begin{proof}
Let $X$ be a hypersurface of degree $3$, in $\PR^6$. Then a smooth hyperplane section of $X$ is Fano, hence rationally connected and satisfies the Hodge Conjecture. On the other we have that since $H^{3,1}(X_t)$ is one dimensional, the condition of the above theorem is satisfied.
\end{proof}

Now it is known (due to Beauville and Donagi) \cite{BD} that for a cubic fourfold $X_t$ that $H^4(X_t,\QQ)$ is isomorphic to $H^2(F(X_t),\QQ)$, where $F(X_t)$ is the Fano variety of lines on $X_t$. The Fano variety of lines on $X$, is denoted by $F(X)$. It is a smooth projective variety of dimension $6$. We are now interested in understanding the kernel of the Neron severi group $\NS^1(F(X_t))\to \NS^2(F(X))$ and we prove the following theorem:

\begin{theorem}
For a very general $t$,  the kernel of the push-forward at the level of  Neron-Severi group $\NS^1(F(X_t))\to \NS^2(F(X))$ is parametrized by
$$\{Z\subset F(X_t)|Z\not\in \bcK_t, t\in U\}\subset \bcC_{3,d,U}$$
where 
$$\bcC_{3,d,U}=\{(Z,t)|Z\subset F(X_t)\}\;.$$
\end{theorem}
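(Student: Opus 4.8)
The plan is to mimic the proof of the previous theorem, transporting the monodromy argument from the vanishing cohomology of cubic fourfolds $X_t$ to the primitive cohomology of the Fano varieties of lines $F(X_t)$. First I would set up the relative Fano variety of lines: over the open set $U \subset {\PR^6}^*$ parametrizing smooth hyperplane sections, form $\bcF_U = \{(t,\ell) \mid \ell \subset X_t\}$ with its projection to $U$, whose fibre over $t$ is $F(X_t)$. By Beauville--Donagi \cite{BD}, $H^4(X_t,\QQ) \cong H^2(F(X_t),\QQ)$ as Hodge structures (up to a Tate twist and the known decomposition of $H^2(F(X_t))$), and this isomorphism is compatible with the Lefschetz correspondence given by the incidence variety, hence is a morphism of local systems over $U$. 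Consequently the local system $R^2(\bcF_U \to U)_*\QQ_{\mathrm{prim}}$ is identified with $R^4\phi_*\QQ_{\mathrm{van}}|_U$ (where $\phi$ is as in the previous proof), and in particular it is irreducible as a representation of $\pi_1(U,t)$, because we proved irreducibility there.

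Next I would observe that $F(X_t)$ is a holomorphic symplectic fourfold, so $H^{2,0}(F(X_t)) \neq 0$; under the Beauville--Donagi isomorphism the $(2,0)$ part of $H^2(F(X_t))$ corresponds to the $H^{3,1}(X_t)$ part of $H^4(X_t)$, which is one-dimensional and lies in the vanishing cohomology $\ker(H^4(X_t,\CC) \to H^6(X,\CC))$ (the argument from Corollary \ref{cor1} applies verbatim). Hence the analogue of hypothesis (i) of the first theorem holds for the family $F(X_t)$: the piece $H^{2,0}(F(X_t)) \cap \ker\big(H^2(F(X_t),\CC) \to H^4(F(X),\CC)\big)$ is nonzero, where $F(X_t) \to F(X)$ is the inclusion. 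For hypothesis (ii), one needs the Hodge conjecture for $(1,1)$ classes on $F(X_t)$, which is the Lefschetz $(1,1)$ theorem and therefore unconditional; moreover $F(X_t)$ is simply connected so algebraic and homological equivalence agree, and the deformation-invariant part of $H^2$ is governed by the intermediate Jacobian / transcendental lattice exactly as in the cubic case, reducing injectivity of $\CH^1$ (equivalently $A_1$) to injectivity on the Néron--Severi groups.

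With these two inputs in place, the proof of the first theorem goes through word for word with $H^4(X_t)$ replaced by $H^2(F(X_t))$ and $\bcC_{2,d,U}$ replaced by $\bcC_{3,d,U} = \{(Z,t) \mid Z \subset F(X_t)\}$ (divisors, i.e. codimension-one cycles, on the $4$-fold $F(X_t)$): one assumes the relevant component of the relative Chow scheme dominates $U$, passes to a generic multisection $\bcK$, trims the non-smooth locus, uses Ehresmann to get a local system $F$ of cycle classes inside $R^2_{\mathrm{prim}}$, takes the kernel $F'$ of the Gysin map to $H^4(F(X))$, and concludes by irreducibility that $F' = 0$ unless it is everything, the latter being excluded by the nonvanishing of $H^{2,0} \cap (\text{vanishing part})$. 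Finally, as before, one removes the finitely many degrees $d$ for which $\bcC_{3,d,U} \to U$ fails to be dominant, taking $t$ outside the union of their (proper, closed) images, so that every class in the kernel of $\NS^1(F(X_t)) \to \NS^2(F(X))$ is represented by a cycle in some dominating $\bcC_{3,d,U}$, and the noninjectivity locus is exactly $\{Z \subset F(X_t) \mid Z \notin \bcK_t,\ t \in U\}$.

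The main obstacle I expect is verifying that the Beauville--Donagi isomorphism is genuinely an isomorphism of \emph{variations of Hodge structure over $U$}, compatibly with the two Gysin maps (to $H^6(X)$ on the cubic side and to $H^4(F(X))$ on the Fano side), so that "irreducible local system with a nonzero $H^{2,0}$-part in the vanishing piece" transfers cleanly; this requires checking that the incidence correspondence between $\bcX_U$ and $\bcF_U$ is flat over $U$ and that the induced correspondence identifies the sub-local systems in question. A secondary point to pin down is the precise relation between $A_1(F(X_t))$ and $\NS^1(F(X_t))$ — one must make sure the intermediate Jacobian $IJ^1(F(X_t)) = \mathrm{Pic}^0$ is deformation-invariant (it is, being an abelian variety varying in a family with the Lefschetz theorem comparing it to $F(X)$) so that the problem really does reduce to the Néron--Severi statement, exactly the reduction carried out at the start of Section 2 for the cubic fivefold.
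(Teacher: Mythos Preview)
Your proposal is correct and follows essentially the same approach as the paper: both transport the irreducibility of the vanishing local system $R^4\phi_*\QQ_{\van}$ to $H^2(F(X_t),\QQ)$ via the Beauville--Donagi isomorphism, run the identical generic-multisection/Ehresmann/local-subsystem argument on $\bcC_{3,d,U}$, and exclude the ``full'' alternative using the nonzero class in $H^{3,1}(X_t)\cong H^{2,0}(F(X_t))$ lying in the vanishing part. The only cosmetic difference is that you frame the argument as verifying hypotheses (i)--(ii) of the first theorem for the family $\{F(X_t)\}_{t\in U}$ and then invoking that proof verbatim, whereas the paper instead writes down the commutative square relating the two Gysin maps and reruns the monodromy argument by pulling the local system $F$ back through Beauville--Donagi into $H^4(X_t,\QQ)$; your concern about compatibility of the Beauville--Donagi correspondence with the two Gysin maps is exactly the content of that square, which the paper states but does not further justify.
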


\begin{proof}
 Consider the following commutative diagram:

$$
  \diagram
 \NS^2(X_t)\ar[dd]_-{} \ar[rr]^-{} & & \NS^3(X) \ar[dd]^-{} \\ \\
  \NS^1(F(X_t)) \ar[rr]^-{} & & \NS^2(F(X))
  \enddiagram
  $$

Also the commutative diagram at the level of cohomology:

$$
  \diagram
 H^4(X_t,\QQ)\ar[dd]_-{} \ar[rr]^-{} & & H^6(X,\QQ) \ar[dd]^-{} \\ \\
  H^2(F(X_t),\QQ) \ar[rr]^-{} & & H^4(F(X),\QQ)
  \enddiagram
  $$
By the corollary \ref{cor1}, we have that the map from $\NS^2(X_t)$  to $\NS^3(X)$ is supported on 
$$\{z\subset X_t|z\not\in K_t, t\in U\}\subset \bcC_{2,d,U}$$
where $K$ is a generic multisection of $\bcC_{2,d,U}$. Using this information and the fact that the kernel
$$\ker(H^4(X_t,\QQ)\to H^6(X,\QQ))$$
is an irreducible $\pi_1(U,t)$ module ($U$ is the collection of smooth hyperplane sections of $X$), we want to prove that the kernel of the map $\NS^1(F(X_t))\to \NS^2(F(X))$ is supported on
$$\{Z\subset F(X_t)|Z\not\in \bcK_t\}\subset \bcC_{3,U,d}$$
where $\bcK$ is a generic multisection of $\bcC_{3,U,d}$ parametrizing dimension three cycles $Z$ such that $Z$ is supported on $F(X_t)$,  for a very general $X_t$. Note that we have an isomorphism $H^4(X_t,\QQ)$ and $H^2(F(X_t),\QQ)$.  We have the induced action of $\pi_1(U,t)$ on $H^2(F(X_t),\QQ)$ and image of $\ker(H^4(X_t,\QQ)\to H^6(X,\QQ))$ in $H^2(F(X_t),\QQ)$ is irreducible with this action.
Now suppose that we have an element $\alpha$ in $\NS^1(F(X_t))$, which goes to zero under the push-forward from $\NS^1(F(X_t))$ to $\NS^2(F(X))$. Then as previous consider the relative Chow scheme $\bcC_{3,U,d}$ consisting of pairs $(Z,t)$, such that support of $Z$ is contained in $F(X_t)$. Suppose that this relative Chow scheme maps dominantly onto $U$.  Here $Z$ is a dimension $3$ cycle of degree $d$ on $F(X)$. Consider a hyperplane multisection of $\bcC_{3,U,d}$, say $\bcK$ which is smooth and maps generically finitely to $U$. Throwing out a Zariski closed subset of $U$, we have $\bcK\to V$ a proper submersion of smooth  manifolds. Then $\bcK\to V$ gives rise to a fibration. Hence we have a local system consisting of the cohomological cycle classes of  the elements of the fibers of $\bcK\to V$ which is finite. Let us denote this local system by $F=\{\alpha_{1,t},\cdots,\alpha_{i,t}\}$. Then $F$ is a local sub-system of the system $H^2(F(X_t),\QQ)$, which is isomorphic to $H^4(X_t,\QQ)$. Hence consider
$$F'=F\cap \ker(H^4(X_t,\QQ)\to H^6(X,\QQ))\;.$$
Since the representation of $\pi_1(U,t)$ is irreducible on
$$\ker(H^4(X_t,\QQ)\to H^6(X,\QQ))$$
we have that $F'=0$ or $\ker(H^4(X_t,\QQ)\to H^6(X,\QQ))$. That means that the image of the local system given by $\{\alpha_{i,t}\}$ in $H^4(X_t,\QQ)$ is either zero or all of the vanishing cohomology
$$\ker(H^4(X_t,\QQ)\to H^6(X,\QQ))\;.$$
In the second case we have all the elements of vanishing cohomology are given by classes of algebraic subvarieties, which is not true as
$$H^{3,1}(X_t)\cap \ker(H^4(X_t,\QQ)\to H^6(X,\QQ))\neq 0\;.$$
Therefore for a very general $t$, the map from $\NS^1(F(X_t))$ to $\NS^2(F(X))$ has the non-injectivity locus parametrized by
$$\{Z\subset F(X_t)|Z\not\in \bcK_t, t\in U\}\subset \bcC_{3,d,U}$$
for a generic hyperplane section $\bcK$ of $\bcC_{3,d,U}$ parametrizing the cycles of dimension $3$ on $F(X)$, which are supported on $F(X_t)$.
\end{proof}

\section{Hypersurfaces of high degree in $\PR^5$ and their hyperplane section}
Let $X$ be a smooth hypersurface in $\PR^5$, of degree greater than or equal to $7$. Consider a hyperplane sections of $X$, say $X_t$.  By the theorem of Green and Voisin \cite{Vo}[theorem 7.19], \cite{G}, we know that the image of the Abel-Jacobi map from $A_1(X_t)$ (the group of homologically trivial cycles modulo rational equivalence) to the corresponding intermediate Jacobian is torsion. Therefore all elements of $A_1(X_t)$ are torsion by a result of Colliot-Thelene and Sansuc, \cite{CSS}. Now consider the push-forward homomorphism from $A_1(X_t)$ to $A_1(X)$. We would like to investigate the kernel of this map and we prove the following theorem:

\begin{theorem}
Let $X$ be a smooth hypersurface of degree greater or equal than $7$ in $\PR^5$. Then for a very general hyperplane section $X_t$ of $X$, the kernel of the push-forward map $A_1(X_t)\to A_1(X)$ is parametrised by
$$\{Z\subset X_t|Z\not\in \bcK_t, t\in U\}\subset \bcC_{1,d,U}\;,$$
here $\bcK$ a generic multisection of $\bcC_{1,d,U}$.
\end{theorem}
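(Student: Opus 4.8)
The plan is to follow the same monodromy strategy used in the proof of the first theorem, now applied to one-cycles on hyperplane sections of a hypersurface of degree $\geq 7$ in $\PR^5$. The essential point is that $X_t$ is a smooth threefold whose vanishing cohomology sits in degree $4$, namely $H^4(X_t,\QQ)_{van}=\ker(H^4(X_t,\QQ)\to H^6(X,\QQ))$, and by the Lefschetz-type arguments of Voisin \cite{Vo} this is an irreducible $\pi_1(U,t)$-module carrying a nonzero $(3,1)$-part once $\deg X\geq 7$; indeed the Green--Voisin theorem \cite{Vo}[Theorem 7.19] is precisely the statement that $H^{3,1}(X_t)\cap H^4(X_t,\QQ)_{van}\neq 0$ in this range, and moreover guarantees $A_1(X_t)$ is torsion. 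So the two hypotheses (i) and (ii) that fed the first theorem are available here: (i) holds by Green--Voisin, and (ii) — the Hodge conjecture for the relevant classes — is replaced by the analogous statement that the classes of algebraic one-cycles span a sublocal system, which is all that is actually needed.

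First I would set up the relative Chow scheme $\bcC_{1,d,U}=\{(t,Z)\mid \Supp(Z)\subset X_t\}$ parametrizing degree-$d$, dimension-one cycles on $X$ supported on the fibers $X_t$, and split into the two cases according to whether the projection $\bcC_{1,d,U}\to U$ is dominant. For the non-dominant $d$, the image $Z_d\subset U$ is a proper closed subset, and we remove $\cup_d Z_d$; over the complement, every class in the kernel of $A_1(X_t)\to A_1(X)$ is represented by a cycle coming from some dominant component. For such a component, choose a general multi-hyperplane section $\bcK$ of $\bcC_{1,d,U}$ so that $\bcK\to U$ is generically finite; after deleting a further Zariski-closed subset we obtain a proper submersion $\bcK\to V$, hence a fibration by Ehresmann, and the cycle classes $\alpha_{1,t},\dots,\alpha_{n,t}$ of the fiber generate a local system $F\subset R^4\phi_*\QQ|_V$ where $\phi:\bcX_V\to V$ is the universal hyperplane section. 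Then I would form $F'=F\cap (R^4\phi_*\QQ_{van})|_V$; because $\pi_1(V,t)\twoheadrightarrow\pi_1(U,t)$ (real codimension $\geq 2$ of the removed locus) and the $\pi_1(U,t)$-action on $H^4(X_t,\QQ)_{van}$ is irreducible, $F'$ is either $0$ or all of $(R^4\phi_*\QQ_{van})|_V$. The second alternative would force every vanishing class to be algebraic, contradicting $H^{3,1}(X_t)\cap H^4(X_t,\QQ)_{van}\neq 0$ from Green--Voisin; hence $F'=0$, so every algebraic one-cycle parametrized by $\bcK$ that is homologically trivial on $X$ is already homologically trivial — equivalently, since $A_1$ is torsion and $A_1(X_t)$ is governed by this vanishing cohomology, already zero — on $X_t$.

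It remains to account for the cycles that are not captured by the chosen multisection $\bcK$: a one-cycle $Z$ on $X_t$ may be homologically trivial on $X$ without lying in the finite fiber $\bcK_t$. Exactly as in the first theorem, this is the reason the non-injectivity locus is only \emph{supported on} $\{Z\subset X_t\mid Z\notin\bcK_t,\ t\in U\}\subset\bcC_{1,d,U}$ rather than being equal to it, and assembling the conclusion over $t\in U\setminus\cup_d Z_d$ gives the stated parametrization with $\bcK$ a generic multisection of $\bcC_{1,d,U}$ for the $d$ with $\bcC_{1,d,U}\to U$ dominant.

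The step I expect to be the main obstacle is the passage from cohomological triviality to rational-equivalence triviality: on the threefold $X_t$ the group $A_1(X_t)$ is torsion (Green--Voisin plus Colliot-Th\'el\`ene--Sansuc \cite{CSS}), so the Abel--Jacobi map does not see it, and one must argue that the monodromy/irreducibility input on $H^4(X_t,\QQ)_{van}$ nonetheless constrains the torsion cycle classes — this is where one leans on the fact that the relevant local system of algebraic classes, being a genuine sub-local system of an irreducible one with nonzero Hodge $(3,1)$-component, must vanish, forcing the corresponding cycles to be trivial. Making this reduction precise (rather than merely cohomological) is the delicate point; the rest is a transcription of the argument in the proof of the first theorem with $\CH_2$ replaced by $A_1$ and dimension-two cycles replaced by one-cycles.
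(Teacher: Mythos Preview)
Your approach has a dimensional error that makes the argument collapse. Here $X\subset\PR^5$ is a fourfold, so a smooth hyperplane section $X_t$ is a \emph{threefold} (a hypersurface in $\PR^4$). By the Lefschetz hyperplane theorem and Poincar\'e duality, $H^4(X_t,\QQ)\cong\QQ$ is one--dimensional, generated by the class of a line; consequently $H^{3,1}(X_t)=0$ and the Gysin map $H^4(X_t,\QQ)\to H^6(X,\QQ)$ is an isomorphism, so $H^4(X_t,\QQ)_{van}=0$. Your claimed input from Green--Voisin, that $H^{3,1}(X_t)\cap H^4(X_t,\QQ)_{van}\neq 0$, is therefore false in this setting; the Green--Voisin theorem is a statement about the Abel--Jacobi map (hence about $H^3$), not about $H^4$. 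Worse, $A_1(X_t)$ by definition consists of homologically trivial one--cycles, so their classes in $H^4(X_t,\QQ)$ vanish identically and your local system $F$ is zero before any monodromy is invoked. The ``main obstacle'' you flag at the end is not a residual difficulty but the signal that the whole $H^4$ set-up is inapplicable.

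The paper's argument works entirely in $H^3$. The key device you are missing is this: since the Abel--Jacobi images of the fiber cycles $Z_{it}$ in $IJ^2(X_t)$ are torsion (Green--Voisin), they lie in $H^3(X_t,\QQ)/H^3(X_t,\ZZ)$ and hence lift to honest elements of $H^3(X_t,\QQ)$; because the cycles sit in the kernel of the push-forward, these lifts land in the Gysin kernel $\ker\big(H^3(X_t,\QQ)\to H^5(X,\QQ)\big)$. It is on this odd--degree vanishing cohomology that the monodromy representation of $\pi_1(U,t)$ is irreducible (Picard--Lefschetz), and the subrepresentation $V$ spanned by the lifted Abel--Jacobi images is the object to which the either/or dichotomy applies. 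So the missing idea is not ``passage from cohomological to rational--equivalence triviality'' --- the cohomology class is already trivial --- but rather the use of the Abel--Jacobi map together with the torsion property to manufacture nontrivial classes in $H^3(X_t,\QQ)$ on which monodromy can act.
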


\begin{proof}
Suppose that the map from $\bcC_{1,d,U}$ to $U$ is dominant.
Then by bertini's theorem for a general $t$, there exists a smooth hyperplane section $\bcK$ of $\bcC_{1,d,U}$, such that all the fibers of the projection $\bcK\to U$ are smooth (after throwing out a Zariski closed subset of $U$) and the projection is generically finite.
Let $Z_{1t},\cdots,Z_{nt}$ be the fiber of the projection over $t$. Then we have an action of $\pi_1(U,t)$, on the Abel-Jacobi image of the cycles $Z_{it}$ in the torus $IJ^2(X_t)$(this family of intermediate Jacobians gives a fibration over $u$). Let $V$ denote the sub-torus of $IJ^2(X_t)$, generated by the Abel-Jacobi images of $Z_{it}$. Since all these images of $Z_{it}$'s are torsion, they all belong to $H^3(X_t,\QQ)/H^3(X_t,\ZZ)$. So consider the lifts of Abel-Jacobi images of $Z_{it}$, in $H^3(X_t,\QQ)$. Call the vector space generated by them as $V$. The lifts actually belong to the kernel of the Gysin map from $H^3(X_t,\QQ)$ to $H^5(X,\QQ)$ (because the lift at the level of cohomology with complex coefficients is zero and it is unique by the homotopy lifting property). By the Picard-Lefschetz formula the action of $\pi_1(U,t)$ on this kernel is irreducible. Hence the sub-representation $V$ is either trivial or all of the Gysin kernel. So it means that for a very general  $t$ in $U$, the push-forward from $A_1(X_t)$ to $A_1(X)$ has the kernel parametrised by
$$\{Z\subset X_t|Z\not\in \bcK_t, t\in U\}\subset \bcC_{1,d,U}\;.$$
\end{proof}

\end{document}